\newtheorem{theorem}{Theorem}[section]
\newtheorem{lemma}[theorem]{Lemma}
\theoremstyle{definition}
\renewcommand{\leq}{\leqslant}
\renewcommand{\geq}{\geqslant}
\newcommand\im{\operatorname{im}}
\def\F{\mathbb{F}}
\def\Z{\mathbf{Z}}
\def\E{\mathbf{E}}
\def\P{\mathbf{P}}
\def\N{\mathbf{N}}
\numberwithin{equation}{section}
\begin{document}

\title[S\'ark\"ozy's theorem in function fields]{S\'ark\"ozy's theorem in function fields}


\author{Ben Green}
\address{Mathematical Institute, Radcliffe Observatory Quarter, Woodstock Rd, Oxford OX2 6GG}

\thanks{The author is supported by a Simons Investigator Grant and by ERC Starting Grant 274938, \emph{Approximate Algebraic Structure}. This work was entirely carried out while the author was visiting IMS, Singapore, and he thanks the institute for their hospitality.}

\begin{abstract}
S\'ark\"ozy proved that dense sets of integers contain two elements differing by a $k$th power. The bounds in quantitative versions of this theorem are rather weak compared to what is expected. We prove a version of S\'ark\"ozy's theorem for polynomials over $\F_q$ with polynomial dependencies in the parameters. 

More precisely, let $P_{q,n}$ be the space of polynomials over $\F_q$ of degree $< n$ in an indeterminate $T$.  Let $k \geq 2$ be an integer and let $q$ be a prime power. Set $c(k,q) := (2 k^2  D_q(k)^2\log q)^{-1}$, where $D_q(k)$ is the sum of the digits of $k$ in base $q$. If $A \subset P_{q,n}$ is a set with $|A| > 2q^{(1 - c(k,q))n}$, then $A$ contains distinct polynomials $p(T), p'(T)$ such that $p(T) - p'(T) = b(T)^k$ for some $b \in \F_q[T]$. 
\end{abstract}
\maketitle

\section{Introduction}
Let $q$ be a prime power, and $n$ an integer. Write $P_{q,n}$ for the $n$-dimensional vector space over $\F_q$ consisting of all polynomials $c_0 + c_1 T + \dots + c_{n-1} T^{n-1}$ (where $T$ is an indeterminate) of degree $< n$. 

\begin{theorem}\label{mainthm1}
Let $k \geq 2$ be an integer and let $q$ be a prime power. Set $c(k,q) := (2 k^2 D_q(k)^2\log q)^{-1}$, where $D_q(k)$ is the sum of the digits of $k$ in base $q$. If $A \subset P_{q,n}$ is a set with $|A| > 2q^{(1 - c(k,q))n}$, then $A$ contains distinct polynomials $p(T), p'(T)$ such that $p(T) - p'(T) = b(T)^k$ for some $b \in \F_q[T]$. \end{theorem}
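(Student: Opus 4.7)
The approach is standard Fourier analysis on $P_{q,n}$; the density hypothesis is strong enough that no density-increment iteration is needed. Set $m := \lfloor n/k \rfloor$, identify the dual of $P_{q,n}$ with itself via the residue pairing on $\F_q((1/T))/\F_q[T]$, and write $e(\cdot)$ for the associated additive character. The central object is the Weyl sum
\[
W_{m,k}(\phi) \;:=\; \sum_{b \in P_{q,m}} e\bigl(\phi \cdot b^k\bigr).
\]
Fourier inversion expresses the number of triples $(a,a',b)$ with $a,a' \in A$, $b \in P_{q,m}$ and $a - a' = b^k$ as a principal term $|A|^2 q^{m-n}$ (from $\phi = 0$) plus an error bounded by $|A|\cdot \max_{\phi \neq 0}|W_{m,k}(\phi)|$. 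Since the triples with $b = 0$ contribute exactly $|A|$, exhibiting a nontrivial $k$th-power difference in $A$ reduces to
\[
|A|^2 q^{m-n} \;>\; |A| \;+\; |A|\cdot \max_{\phi \neq 0}\bigl|W_{m,k}(\phi)\bigr|.
\]

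The proof thus hinges on the Weyl-type bound $\max_{\phi \neq 0}\lvert W_{m,k}(\phi)\rvert \leq q^{m - cn}$ for $c = c(k,q)$: granting this, the hypothesis $|A| > 2q^{(1-c)n}$ is exactly what is needed, with the factor of $2$ simultaneously absorbing the diagonal and the error. The structural input driving the Weyl bound is the Frobenius identity $b(T)^{q^i} = b(T^{q^i})$ for $b \in \F_q[T]$: writing $k = \sum_i k_i q^i$ in base $q$, this gives
\[
b(T)^k \;=\; \prod_i b\bigl(T^{q^i}\bigr)^{k_i},
\]
so that, viewed as a polynomial in the $m$ coefficients of $b$, the right-hand side has total degree $\sum_i k_i = D_q(k)$, typically far smaller than $k$. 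A Weyl differencing argument against this ``effective degree'' $D_q(k)$---differencing $D_q(k)-1$ times, applying Cauchy--Schwarz at each step to peel off one Frobenius factor, and reducing to a multilinear exponential sum that can be handled by orthogonality---should then produce a saving of shape $q^{-\delta m}$ with $\delta$ polynomial in $1/k$ and $1/D_q(k)$, matching the stated $c(k,q) = (2k^2 D_q(k)^2 \log q)^{-1}$.

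The main obstacle is executing this Weyl bound with polynomial-in-$k$ constants. Over $\Z$, Weyl's inequality for $\sum_{x \leq N} e(\alpha x^k)$ requires $k-1$ differencings and saves only $N^{1/2^{k-1}}$; what rescues the function-field argument---especially when the characteristic divides $k$, so that ordinary differentiation of $b^k$ degenerates---is precisely the Frobenius decomposition, which both makes the differencing argument go through and replaces ``$k$'' by ``$D_q(k)$'' as the relevant count of iterations. A secondary but delicate point is the rational-approximation (minor arcs) step that accompanies any Weyl bound: here it is controlled by the degree of the denominator of $\phi$ regarded as an element of $\F_q((1/T))$, and it is the cleanness of this combinatorics in the function-field setting that ultimately allows the polynomial (rather than tower-exponential) dependencies in the final statement. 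Once the Weyl bound is in hand, the Fourier identity of the first paragraph finishes the proof immediately.
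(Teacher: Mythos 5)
This is a genuinely different approach from the paper (the paper uses the Croot--Lev--Pach polynomial method, not Fourier analysis), but it has a fatal gap: the pointwise Weyl bound on which everything rests is false, and acknowledging that fact forces one back into a density-increment iteration that cannot reach the claimed exponent.

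Concretely, identify the dual of $P_{q,n} \cong \F_q^n$ with $\F_q^n$ via a fixed nontrivial additive character $\chi$ of $\F_q$, so that $e(\phi \cdot p) = \chi\bigl(\sum_i \phi_i p_i\bigr)$. Writing $b(T)^k = \sum_i g_i(b_0,\dots,b_{m-1}) T^i$, the Weyl sum becomes $W_{m,k}(\phi) = \sum_{b \in \F_q^m}\chi\bigl(\sum_i \phi_i g_i(b)\bigr)$. Now take $\phi$ supported on the constant coordinate only, $\phi = (\phi_0,0,\dots,0)$ with $\phi_0 \ne 0$. Then $\sum_i \phi_i g_i(b) = \phi_0\, b_0^k$ depends only on $b_0$, so
\[
W_{m,k}(\phi) \;=\; q^{m-1}\sum_{b_0\in\F_q}\chi(\phi_0 b_0^k),
\]
which has modulus $\gg q^{m-1}$ whenever $\gcd(k,q-1)>1$ (and even when $\gcd(k,q-1)=1$ one can take $\phi$ supported on a few low coordinates and get $|W_{m,k}(\phi)| \gg q^{m-O_k(1)}$). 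Since $m \sim n/k$ while the claimed bound $q^{m-c(k,q)n}$ has a loss growing linearly in $n$, for large $n$ these degenerate $\phi$ violate your proposed inequality $\max_{\phi\ne 0}|W_{m,k}(\phi)| \le q^{m-c(k,q)n}$ by an arbitrarily large margin. This is exactly the ``major arc'' phenomenon you mention in passing: nonzero frequencies $\phi$ near zero in the function-field metric do not exhibit cancellation, and the only known way to handle them in the Fourier framework is a density increment on a sub-coset, iterated. That iteration is precisely what produces the weak (logarithmic-type) bounds of S\'ark\"ozy and of the earlier function-field work \cite{hoang}, not the power-saving bound of the theorem. Even setting the major arcs aside, classical Weyl differencing with $D_q(k)-1$ iterations of Cauchy--Schwarz saves only $q^{-m/2^{D_q(k)-1}}$, which is exponentially worse in $D_q(k)$ than the stated $c(k,q)$; the ``polynomial in $k$ and $D_q(k)$'' dependence you are targeting has no mechanism in a differencing argument. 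The paper sidesteps all of this by never estimating a Weyl sum: it encodes $\im(\Phi)$ as the support of a polynomial of degree $\le (q-1)(n-m/d)$ (Lemma \ref{lem3-main}) and applies the Croot--Lev--Pach rank bound to the matrix $P(a-a')$, which is what makes polynomial dependencies possible.
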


In certain cases, a stronger result than Theorem \ref{mainthm1} is rather trivial. For example, if $k = q^r$ then
\[ (a_0 + a_1 T + \dots + a_m T^m)^{k} = a_0 + a_1 T^k + \dots + a_m T^{mk}.\] Taking $m = \lfloor \frac{n-1}{k} \rfloor$, a bound of the form $|A| \ll q^{(1 - \frac{1}{k}) n}$ follows quickly from the pigeonhole principle. In most cases, however, no trivial argument of this type is available.

Our method also proves the following result, which is qualitatively more general than Theorem \ref{mainthm1} though has a slightly worse exponent.

\begin{theorem}\label{mainthm2}
Let $F \in \F_q[T]$ be a polynomial of degree $k$ with zero constant term, the number of whose roots in $\F_q$ is coprime to $q$. Set $c'(k,q) := (2 k^2 (q - 1)^2 (1 + \log_q k)^2 \log q)^{-1}$. If $A \subset P_{q,n}$ is a set with $|A| > 2 q^{(1 - c'(k,q))n}$, then $A$ contains distinct polynomials $p(T), p'(T)$ such that $p(T) - p'(T) = F(b(T))$ for some $b \in \F_q[T]$. 
\end{theorem}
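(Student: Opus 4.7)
The plan is to adapt the Fourier-analytic argument used for Theorem~\ref{mainthm1}, replacing the exponential sum bound for $k$-th powers by a more robust estimate depending only on the hypotheses on $F$. As in the $b^k$ case, no density increment is required: everything comes out of a single application of Parseval.

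First, let $m = \lceil n/k \rceil$, so that $F(b) \in P_{q,n}$ whenever $b \in P_{q,m}$, and set
\[ N(A) := \sum_{b \in P_{q,m}} \sum_{p \in P_{q,n}} 1_A(p)\, 1_A(p + F(b)). \]
Since $F$ has zero constant term and positive degree, any $b \in P_{q,m}$ with $F(b) = 0$ must be a constant root of $F$ (otherwise $\deg F(b) = k \deg b \geq k > 0$). There are therefore exactly $r$ such $b$, where $r$ is the number of $\F_q$-roots of $F$, and the diagonal contribution to $N(A)$ equals $r|A|$. The theorem follows if we can show $N(A) > r|A|$.

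Fix a non-trivial additive character $\psi$ of $\F_q$ together with a non-degenerate $\F_q$-bilinear pairing on $P_{q,n}$, and let $\chi_\xi$ denote the corresponding characters. Fourier expansion gives
\[ N(A) = \frac{1}{q^n} \sum_{\xi \in P_{q,n}} |\widehat{1_A}(\xi)|^2\, S(\xi), \qquad S(\xi) := \sum_{b \in P_{q,m}} \chi_\xi(F(b)). \]
The term $\xi = 0$ contributes exactly $|A|^2 q^{m-n}$, so everything reduces to a uniform bound of the shape $|S(\xi)| \leq q^{m(1-\beta)}$ for $\xi \neq 0$, where $\beta = \beta(k,q)$ is a small quantity tailored to the constant $c'(k,q)$. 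Given such a bound, Parseval yields $\bigl|\sum_{\xi \neq 0} |\widehat{1_A}(\xi)|^2 S(\xi)\bigr| \leq q^{m(1-\beta)}|A|$, and the main term dominates provided $|A| > 2q^{(1-c'(k,q))n}$.

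The character sum bound is the main obstacle and is where the specific hypotheses on $F$ enter. I would attempt to derive it by Weyl-style differencing in the $b$ variable: differencing by $h$ replaces $F(b)$ by $F(b+h) - F(b)$, a polynomial whose degree in $b$ drops by $1$ precisely when the leading binomial coefficient survives modulo $p$. By Lucas's theorem this survival is dictated by the base-$q$ digits of $k$, which is the source of the factor $(1+\log_q k)^2$ in $c'(k,q)$; the factor $(q-1)^2$ in turn reflects the passage from the true digit sum $D_q(k)$ appearing in Theorem~\ref{mainthm1} to its crude bound $(q-1)(1+\log_q k)$. The coprime-to-$q$ hypothesis on the number of $\F_q$-roots of $F$ plays the role of a non-degeneracy condition: it guarantees that after iterating roughly $\log_q k$ differencing steps one reaches a linear exponential sum in a surviving top coefficient that is genuinely non-zero, so orthogonality produces the required saving. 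Combining the bound with Parseval then closes the argument.
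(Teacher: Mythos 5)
Your proposal takes a genuinely different route from the paper, and there is a serious gap: the central character sum estimate is not proved, and the method you sketch for proving it could not yield the claimed power-saving constant. The paper deduces Theorem~\ref{mainthm2} directly from Theorem~\ref{mainthm3}, the Croot--Lev--Pach rank argument: the map $b \mapsto F(b)$ on $P_{q,m}$ is packaged as a polynomial map $\Phi : \F_q^m \to \F_q^n$ of degree at most $\sup_{k' \leq k} D_q(k') \leq (q-1)(1+\log_q k)$, the set $\Phi^{-1}(0)$ is identified with the set of $\F_q$-roots of $F$ (hence has cardinality coprime to $q$), and the bound $|A| \leq 2q^n e^{-m^2/2nd^2}$ gives $c'(k,q)$. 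You instead set up the counting operator $N(A)$, Fourier-expand, and reduce everything to a uniform estimate $|S(\xi)| \leq q^{m(1-\beta)}$ for $\xi \neq 0$, which you propose to obtain by Weyl differencing. This is where the argument fails. Weyl differencing on a polynomial phase of degree $D$ in the coordinates of $b$ gives, after $D-1$ steps, a saving in the exponent of order $2^{-(D-1)}$ (exponentially small in $D$), not a saving that is polynomial in $1/D$. Since in your setting $D$ is of size $(q-1)(1+\log_q k)$, the resulting exponent would be doubly-exponentially worse than $c'(k,q)$. This is precisely why the Fourier route historically gives only logarithmic-type savings for S\'ark\"ozy problems (cf.\ \cite{hoang}), and why Croot--Lev--Pach is the decisive new input here. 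Your numerological commentary on where $(q-1)^2$ and $(1+\log_q k)^2$ ``come from'' is reverse-engineered from the stated constant rather than derived from the Weyl mechanism, and it does not correspond to anything Weyl's inequality actually delivers.

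A second, smaller issue: your interpretation of the hypothesis that the number of $\F_q$-roots of $F$ is coprime to $q$ is not correct. You suggest it acts as a non-degeneracy condition ensuring a ``surviving top coefficient'' after differencing; but in the Fourier framework the number of roots $r$ only enters through the diagonal count $r|A|$, and all you need there is $r \leq q$, which holds unconditionally. The coprime-to-$q$ condition is a feature of the polynomial method: it is exactly what guarantees $P(0) = |\Phi^{-1}(0)| \neq 0$ as an element of $\F_q$ in Lemma~\ref{lem3-main}, without which the diagonal of the matrix $M$ in the Croot--Lev--Pach argument would vanish and the rank lower bound $|A|$ would be lost. The fact that your sketch would proceed essentially unchanged if this hypothesis were dropped is a symptom that the Fourier route is not the one the stated constant is built for.
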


These results are natural function field analogues of a well-known result of S\'ark\"ozy \cite{sarkozy}, who showed that if $F \in \Z[X]$ is a polynomial with $F(0) = 0$, and if $A \subset \{1,\dots, N\}$ is a set with $|A| \geq N(\log N)^{-c_F}$, then $A$ contains distinct elements $a, a'$ such that $a - a' = F(x)$ for some $x \in \Z$. However, the bounds we obtain in the function field setting are considerably stronger than the natural analogue of S\'ark\"ozy's bound, and correspond to a bound of the form $|A| \geq N^{1 - c_F}$ for the integer version of the problem. No such bound is known, for any polynomial $F$ of degree greater than $1$. 

For the best known bounds on the integer version of the problem, which are a little better than S\'ark\"ozy's bounds, see \cite{ppss}. For previous results on the function field problem see \cite{hoang}. Regarding lower bounds, the only reference we know is an unpublished manuscript by C.~Link \cite{link}, which adapts some constructions of Ruzsa \cite{ruzsa} from the integers. In particular examples are given of sets $A \subset P_{5,2n}$ with $|A| = 5^{3n/2}$ (thus $|A| = N^{3/4}$, where $N = 5^{2n} = |P_{5,2n}|$) such that $A - A$ does not contain any squares other than 0. We thank J.~Wolf for bringing this reference to our attention. 

Theorems \ref{mainthm1} and \ref{mainthm2} are fairly straightforward consequences of the following statement, which is the ``natural'' result output by the methods of this paper.

\begin{theorem}\label{mainthm3}
Let $d \in \N$, and suppose that $0 < m \leq n$. Let $\Phi = (\phi_1,\dots, \phi_n) : \F_q^m \rightarrow \F_q^n$ be a polynomial map, in which each $\phi_i$ is a polynomial over $\F_q$ in $m$ variables with total degree $\leq d$. Suppose that $|\Phi^{-1}(0)|$ is coprime to $q$ \textup{(}and so, in particular, $0 \in \im(\Phi)$\textup{)}. Let $A \subset \F_q^n$ be a set such that $A - A$ intersects $\im (\Phi)$ only at $0$. Then $|A| \leq 2q^{n}e^{-m^2/2nd^2}$.
\end{theorem}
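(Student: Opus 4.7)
The plan is to use additive Fourier analysis on $\F_q^n$. Fix a nontrivial additive character $\psi : \F_q \to \C^\times$, write $\widehat{1_A}(\xi) := \sum_{a \in A} \psi(-\xi \cdot a)$, and introduce the Weyl sum
\[ S(\xi) := \sum_{x \in \F_q^m} \psi(\xi \cdot \Phi(x)), \]
so that $S(0) = q^m$ and, for each $\xi \neq 0$, the function $\xi \cdot \Phi$ is a polynomial of degree $\leq d$ in the $m$ variables of $\F_q^m$.

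The first step is to count triples $(a, a', x) \in A \times A \times \F_q^m$ with $a - a' = \Phi(x)$. The hypothesis that $(A - A) \cap \im(\Phi) = \{0\}$ forces $\Phi(x) = 0$ and $a = a'$ in any such triple, giving a count of exactly $|A| \cdot |\Phi^{-1}(0)|$, which is positive by the coprime hypothesis. Expanding the indicator of $\{a - a' = \Phi(x)\}$ via Fourier inversion, isolating the $\xi = 0$ contribution (equal to $|A|^2 q^m / q^n$), and applying Parseval's identity $\sum_\xi |\widehat{1_A}(\xi)|^2 = q^n |A|$ to the remainder yields
\[ \left| \frac{|A|^2 q^m}{q^n} - |A| \cdot |\Phi^{-1}(0)| \right| \leq |A| \cdot \max_{\xi \neq 0} |S(\xi)|, \]
and therefore
\[ |A| \leq q^{n-m} \left( |\Phi^{-1}(0)| + \max_{\xi \neq 0} |S(\xi)| \right). \]

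It therefore suffices to prove the uniform estimate
\[ |\Phi^{-1}(0)| + \max_{\xi \neq 0} |S(\xi)| \leq 2 q^m e^{-m^2/(2nd^2)}. \]
Here the coprime hypothesis plays a double role. First, a Chevalley--Warning argument forces $m \leq nd$, which guarantees that the target bound is non-vacuous, since $q^n e^{-m^2/(2nd^2)} \geq q^n e^{-n/2} \geq 1$ for $q \geq 2$. Second, coprimality excludes the degenerate situation in which $\Phi$ factors through a Frobenius endomorphism $x \mapsto x^p$, so that the composites $\xi \cdot \Phi$ retain a genuinely non-degenerate structure as polynomials of degree $\leq d$ in $m$ variables.

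The principal obstacle is the Gaussian-type bound on the Weyl sum $\max_{\xi \neq 0} |S(\xi)|$, jointly with $|\Phi^{-1}(0)|$ in the regime where the latter is large. A pointwise Weil estimate only gives $|S(\xi)| \ll q^{m/2}$ when $\xi \cdot \Phi$ is non-singular, and the possibly singular $\xi$ must be treated uniformly. I expect the bound to follow from an iterative amplification scheme: at each stage peel off one of the $m$ variables of $\F_q^m$, paying a geometric factor of roughly $\exp(-m/(nd^2))$ per step; accumulating such gains over $\Theta(m)$ steps produces the exponent $m^2/(2nd^2)$. Controlling the error terms in this induction, and verifying that the coprime-to-$q$ hypothesis is preserved (or can be re-established) throughout, is the main technical hurdle.
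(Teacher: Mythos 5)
Your proposal takes a genuinely different route from the paper's: you set up the circle method (Fourier analysis on $\F_q^n$, Weyl sums), whereas the paper uses the Croot--Lev--Pach polynomial method. The paper constructs, via the ``$\F_q$-valued Fourier transform'' of $f(x)=|\Phi^{-1}(x)|$, a polynomial $P$ of degree at most $(q-1)(n-\frac{m}{d})$ that is supported on $\im(\Phi)$ with $P(0)\neq 0$, and then bounds the rank of the matrix $(P(a-a'))_{a,a'\in A}$; no exponential sums appear. The Fourier reduction you carry out is algebraically correct (the identity $|A|\,|\Phi^{-1}(0)|=\frac{|A|^2 q^m}{q^n}+\frac{1}{q^n}\sum_{\xi\neq 0}|\widehat{1_A}(\xi)|^2\overline{S(\xi)}$ and the ensuing $|A|\leq q^{n-m}(|\Phi^{-1}(0)|+\max_{\xi\neq 0}|S(\xi)|)$), but it only achieves what a density-increment or $L^\infty$-Fourier method can achieve, and that is structurally weaker than the polynomial method here.

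More importantly, there is a genuine gap: the asserted uniform estimate
\[
|\Phi^{-1}(0)|+\max_{\xi\neq 0}|S(\xi)|\leq 2q^m e^{-m^2/(2nd^2)}
\]
is not proved, is left as an ``iterative amplification scheme'' that is only sketched, and is in fact \emph{false} under the hypotheses of the theorem. Nothing in the hypotheses prevents some nonzero $\xi$ from having $\xi\cdot\Phi\equiv 0$, in which case $S(\xi)=q^m$. Concretely, take $n=3$, $m=2$, $d=1$, $q=2$, and $\Phi(x_1,x_2)=(x_1,x_2,0)$; then $|\Phi^{-1}(0)|=1$ (coprime to $q$), $0<m\leq n$, and for $\xi=(0,0,1)$ we get $S(\xi)=q^2=4$, so the left side is $5$ while the right side is $2\cdot 4\cdot e^{-4/6}\approx 4.1$. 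Thus the inequality you ``reduce to'' fails, and the argument as outlined cannot close. (The conclusion of the theorem is of course still true in this example --- $|A|\leq q=2$ --- the failure is in your intermediate target, not the theorem.) The deeper issue is that the $L^\infty$-Fourier approach is blind to degenerate directions $\xi$ for which $\xi\cdot\Phi$ collapses, and the coprimality hypothesis does nothing to rule such $\xi$ out; the polynomial method avoids this entirely by never passing through exponential sums.
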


The main tool in proving Theorem \ref{mainthm3} will be a simple -- but previously overlooked -- lemma from the recent breakthrough work of Croot, Lev and Pach \cite{croot-lev-pach}.

\emph{Acknowledgement.} We are grateful to Will Sawin for comments on an earlier version of this note. His comments led to a streamlined version of the argument, giving somewhat better dependencies in the exponents in the main theorems by avoiding the appeal to the Chevalley-Warning theorem. We are also grateful to Lisa Sauermann for a correction to the statement of Theorem \ref{mainthm2}.

\section{Multivariate polynomials}

Every function $f : \F_q^n \rightarrow \F_q$ agrees pointwise with a polynomial
\begin{equation}\label{eq17} P_f(x) =  \sum_{\alpha \in \{0,1,\dots, q-1\}^n} \tilde f(\alpha) x^{\alpha},\end{equation}
where here $x^{\alpha} = x_1^{\alpha_1} \cdots x_n^{\alpha_n}$ (and $x = (x_1,\dots, x_n)$), and $\tilde f(\alpha)$ takes values in $\F_q$. To prove this, note that the space $V$ of functions $f : \F_q^n \rightarrow \F_q$ and the space $W$ of polynomials of the above type are both vector spaces over $\F_q$ of dimension $q^n$. Hence it suffices to show that the natural ``evaluation'' map $V \rightarrow W$ is injective, and this may be done by induction on $n$, using the fact that if a polynomial in one variable of degree $\leq q - 1$ vanishes on $\F_q$ then it is the zero polynomial. 

Define the degree of a polynomial in $W$ to be $\max |\alpha|$, where $|\alpha| = \alpha_1 + \dots + \alpha_n$, and the maximum is taken over those $\alpha$ for which $\tilde f(\alpha) \neq 0$. Note that the degree is always at most $(q - 1)n$.

The notation $\tilde f(\alpha)$ is meant to suggest that we think of this as a kind of ``$\F_q$-valued Fourier transform'' of $f$. We have the following formula for these ``Fourier coefficients'' $\tilde f(\alpha)$ in terms of $f$.

\begin{lemma}\label{fourier-lem}
We have 
\[ \tilde f(\alpha) = \sum_{x \in \F_q^n}  f(x) \prod_{i = 1}^n \sigma_{\alpha_i}(x_i),\] where
\[ \sigma_a(x) = \left\{ \begin{array}{ll} -x^{q - 1 - a} & \mbox{if $a > 0$} \\ 1_{x = 0} & \mbox{if $a = 0$}.  \end{array} \right.\]
\end{lemma}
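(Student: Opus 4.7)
Both sides of the asserted formula are $\F_q$-linear in $f$, so I would verify the identity on a spanning set for the space of functions $\F_q^n \to \F_q$. The most natural choice is the monomial basis $\{x^\beta : \beta \in \{0,1,\dots, q-1\}^n\}$, because the uniqueness of the representation \eqref{eq17} established just before the lemma tells us immediately that $\widetilde{(x^\beta)}(\alpha) = \delta_{\alpha,\beta}$. Thus it suffices to show that, for all $\alpha, \beta \in \{0,1,\dots, q-1\}^n$,
\[ \sum_{x \in \F_q^n} x^\beta \prod_{i=1}^n \sigma_{\alpha_i}(x_i) = \delta_{\alpha,\beta}.\]

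The sum on the left factors coordinatewise as $\prod_{i=1}^n \bigl(\sum_{x_i \in \F_q} x_i^{\beta_i}\sigma_{\alpha_i}(x_i)\bigr)$, so everything reduces to the one-variable statement
\[ \sum_{x \in \F_q} x^{b} \sigma_{a}(x) = \delta_{a,b} \qquad (a, b \in \{0,1,\dots,q-1\}). \]
When $a = 0$ this is immediate from the definition: the sum collapses to $0^{b}$, which equals $1$ if $b = 0$ and $0$ otherwise (using the convention $0^0 = 1$). When $a > 0$ the sum becomes $-\sum_{x \in \F_q} x^{b + q - 1 - a}$, which I would evaluate using the standard fact that, for a nonnegative integer $k$,
\[ \sum_{x \in \F_q} x^k = \begin{cases} -1 & \text{if } k > 0 \text{ and } (q-1) \mid k, \\ 0 & \text{otherwise}\end{cases} \]
(with $0^0 = 1$, so $k = 0$ gives $q = 0$ in $\F_q$). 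Since $0 \leq b \leq q - 1$ and $1 \leq a \leq q - 1$, the exponent $b + q - 1 - a$ lies in $[0, 2q-2]$ and is a positive multiple of $q - 1$ precisely when $a = b$ (giving exponent $q - 1$); the remaining candidates either produce exponent $0$ (then the sum vanishes because $q = 0$) or fail the divisibility. So the one-variable identity holds.

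I do not anticipate a real obstacle here; the only mildly delicate point is keeping track of the edge cases $k = 0$ and $b + q - 1 - a = 0$, where one must remember that $\sum_{x \in \F_q} 1 = q = 0$ rather than being given by the $(q-1) \mid k$ rule. A parallel route would be to test the identity on the indicator functions $\delta_y$ and use the expansion $\delta_y(x) = 1 - (x-y)^{q-1}$ together with the congruence $\binom{q-1}{a} \equiv (-1)^a \pmod p$, but the monomial route above seems shorter and avoids invoking Lucas' theorem.
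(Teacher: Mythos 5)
Your proof is correct and follows essentially the same route as the paper: both reduce (by expanding $f$ in the monomial basis, or equivalently by linearity) to the single-variable identity $\sum_{x \in \F_q} x^b \sigma_a(x) = \delta_{a,b}$, and then evaluate that power sum. The only cosmetic difference is that you invoke the standard formula $\sum_{x\in\F_q} x^k = -1$ iff $(q-1)\mid k$ and $k>0$, whereas the paper re-derives it in place via the multiplication-by-$\lambda$ argument; the content is the same.
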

\begin{proof}
We begin with the right hand side.  Starting with \eqref{eq17}, this may be expanded as 
\[ \sum_{\beta \in \{0,1,\dots, q-1\}^n }\tilde f(\beta) \prod_{i = 1}^n S_{\alpha_i, \beta_i}\]
where
\[ S_{a,b} =  \sum_{x \in \F_q} x^b \sigma_a(x).\] Thus it suffices to show that when $a,b \in \{0,1,\dots, q-1\}$ we have $S_{a,b} = 1_{a = b}$. We have $S_{0,b} = 0^b$ and $S_{q-1, 0} = - \sum_{x \in \F_q} x^0 = 0$, so the claim holds in these cases. Also if $a > 0$ then we have $S_{a,a} = -\sum_{x \in \F_q} x^{q - 1} = -\sum_{x \in \F_q^*} 1 = 1$. In the remaining cases we have $a > 0$, $a \neq b$ and $(a,b) \neq (q-1, 0)$, so $b - a$ is not a multiple of $q - 1$. In all such cases, since $a > 0$ we have
  \[ S_{a,b} = \sum_{x \in \F_q} x^{q - 1 + b - a}.\] If $\lambda \in \F_q^*$ then the multiplication-by-$\lambda$ map is a bijection on $\F_q$, and so $S_{a,b} = \lambda^{q - 1 + b - a}S_{a,b}$. But since $q - 1 + b - a$ is a positive integer and not a multiple of $q - 1$ there is a choice of $\lambda$ for which $\lambda^{q - 1 + b - a} \neq 1$. It follows that $S_{a,b} = 0$. This concludes the proof that $S_{a,b} = 1_{a = b}$ in all cases, and hence the lemma.
\end{proof}

We note that when $q = 2$ this formula takes a particularly simple form, namely
\[ \tilde f(\alpha) = \sum_{x \in \F_2^n}  f(x) \prod_{i : \alpha_i = 0} 1_{x_i = 0}.\] 

\section{$\im(\Phi)$ as the support of a polynomial of low degree}

Using the results of the last section, we establish the following key lemma.

\begin{lemma}\label{lem3-main}
Suppose that $\Phi : \F_q^m \rightarrow \F_q^n$ is a polynomial map of degree $\leq d$, that is to say $\Phi = (\phi_1,\dots, \phi_n)$ with each $\phi_i$ an $m$-variable polynomial over $\F_q$ with total degree $\leq d$. Suppose that $|\Phi^{-1}(0)|$ is coprime to $q$. Then there is a polynomial $P \in \F_q[x_1,\dots, x_n]$, with no variable having degree $> (q - 1)$, such that \begin{enumerate}
\item $\deg P \leq (q - 1)(n - \frac{m}{d})$;
\item $P$ is supported on $\im(\Phi)$ \textup{(}that is, $P(x) = 0$ when $x \notin \im(\Phi)$\textup{)};
\item $P(0) \neq 0$.
\end{enumerate}
\end{lemma}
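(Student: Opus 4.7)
The plan is to write down an explicit polynomial $P$ and check the three conditions; only the degree bound will require any work. Concretely, I would set
\[ P(x) \;:=\; \sum_{y \in \F_q^m} \prod_{i=1}^n \bigl(1 - (\phi_i(y) - x_i)^{q-1}\bigr). \]
Using the Fermat identity $a^{q-1} = 1_{a\neq 0}$ for $a \in \F_q$, each factor equals $1$ when $\phi_i(y) = x_i$ and $0$ otherwise, so the product is the indicator of $\Phi(y) = x$. Hence as a function on $\F_q^n$, $P(x)$ equals $|\Phi^{-1}(x)|$ reduced mod $p$. This immediately gives (2), since $P(x)=0$ for $x\notin \im(\Phi)$, and (3), since $|\Phi^{-1}(0)|$ is coprime to $q$, hence to $p$. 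Moreover, each factor has degree $\leq q-1$ in $x_i$ and is independent of the other $x_j$, so, viewed as a genuine element of $\F_q[x_1,\dots,x_n]$, no variable appears to a power greater than $q-1$.

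The main work is the total degree bound (1). I would expand each factor as a polynomial in $x_i$:
\[ 1 - (\phi_i(y) - x_i)^{q-1} \;=\; \sum_{a=0}^{q-1} c^{(i)}_a(y)\, x_i^a, \]
where, up to signs and binomials, $c^{(i)}_a(y)$ is $\phi_i(y)^{q-1-a}$ (with an extra constant $1$ only when $a=0$). The crucial point is that $c^{(i)}_a$ has total degree $\leq d(q-1-a)$ in $y$, because $\phi_i$ has total degree $\leq d$. Multiplying the factors out, the coefficient of a monomial $x^\alpha$ in the product is a polynomial $Q_\alpha(y)$ of total degree at most $d\sum_i(q-1-\alpha_i) = d\bigl((q-1)n - |\alpha|\bigr)$, and the coefficient of $x^\alpha$ in $P(x)$ is $\sum_{y\in \F_q^m} Q_\alpha(y)$.

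To finish, I would invoke the standard fact that $\sum_{y \in \F_q} y^k$ is nonzero in $\F_q$ only when $k$ is a positive multiple of $q-1$. Hence for any monomial $y^\beta$, $\sum_{y \in \F_q^m} y^\beta = 0$ unless every $\beta_j$ is a positive multiple of $q-1$; in particular this forces $|\beta| \geq m(q-1)$. Applying this termwise to $Q_\alpha$, we get $\sum_{y} Q_\alpha(y) = 0$ whenever $\deg Q_\alpha < m(q-1)$, i.e.\ whenever $d\bigl((q-1)n - |\alpha|\bigr) < m(q-1)$, which rearranges to $|\alpha| > (q-1)(n - m/d)$. So all monomials surviving in $P$ satisfy $|\alpha| \leq (q-1)(n - m/d)$, proving (1).

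The only step that feels like a potential obstacle is the degree bookkeeping in the middle paragraph — specifically, the observation that a higher power of $x_i$ in a monomial is paid for by a lower power of $\phi_i(y)$, which is exactly what trades the trivial bound $(q-1)n$ for the stated bound $(q-1)(n - m/d)$. Everything else is formal manipulation of the explicit polynomial.
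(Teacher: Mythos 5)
Your proof is correct and is essentially the paper's argument: the polynomial you write down, namely $\sum_{y} \prod_i (1 - (\phi_i(y)-x_i)^{q-1})$, is precisely the reduced polynomial representative $P_f$ of $f(x)=|\Phi^{-1}(x)|$ that the paper constructs via its ``Fourier coefficient'' Lemma \ref{fourier-lem}. The degree bookkeeping (trading a power of $x_i$ for a power of $\phi_i$) and the vanishing of power sums over $\F_q^m$ unless every exponent is a positive multiple of $q-1$ are exactly the paper's steps, merely presented without the intermediate $\tilde f(\alpha)$ formalism.
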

\begin{proof} Let $f(x) = |\Phi^{-1}(x)|$. Then we will take $P = P_f$ to be the polynomial representation of $f$, as in \eqref{eq17}. Note that $P_f(x) = f(x)$ when evaluated pointwise, and so properties (2) and (3) are immediate. It remains to verify property (1), to which end we must show that $\tilde f(\alpha) = 0$ unless $|\alpha| \leq  (q - 1)(n - \frac{m}{d})$. For this, we use the formula obtained in Lemma \ref{fourier-lem}, which implies that 
\[ \tilde f(\alpha) = \sum_{a \in \F_q^m} \prod_{i = 1}^n \sigma_{\alpha_i}(\phi_i(a)).\] Now observe that $\sigma_{\alpha_i}$ is a polynomial of degree $q - 1- \alpha_i$, this being immediate from the definition when $\alpha_i > 0$, and true also when $\alpha_{i} = 0$ since we have the identity $1_{x = 0} = 1 - x^{q - 1}$. It follows that $\prod_{i = 1}^n \sigma_{\alpha_i}(\phi_i(a))$ is a polynomial in $a = (a_1,\dots, a_m)$ of degree at most $\sum_{i = 1}^n (q - 1 - \alpha_i) d = (n(q - 1) - |\alpha|) d$. If this is $< (q - 1)m$ then, when we sum over $a \in \F_q^m$, we get zero: indeed the lowest-degree monomial with nonzero sum is $\prod_{i = 1}^m a_i^{q - 1}$. Thus indeed $\tilde f(\alpha) = 0$ so long as $(n(q - 1) - |\alpha|) d < m (q - 1)$, which is equivalent to $|\alpha| > (q - 1)(n - \frac{m}{d})$. This concludes the proof.\end{proof}

 \section{Croot--Lev--Pach principle}
 
In this section we complete the proof of Theorem \ref{mainthm3} by applying the idea of Croot, Lev and Pach \cite{croot-lev-pach}.  Suppose that $d, m, \Phi$ are as in the statement of the theorem, and suppose that $A \subset \F_q^n$ is a set for which $A - A$ and $\im(\Phi)$ intersect only at $0$. Let $P$ be the polynomial constructed in Lemma \ref{lem3-main}. Then $P(a - a') = 0$ when $a \neq a'$ (since $P$ is supported on $\im (\Phi)$) but $P(a - a') \neq 0$ when $a = a'$ (since $P(0) \neq 0$). It follows that the rank of the $|A| \times |A|$ matrix $M$ with $M_{a,a'} = P(a - a')$ is precisely $|A|$. On the other hand, it follows from the ideas in \cite{croot-lev-pach} that the rank of $M$ is at most twice the number of indices $\alpha \in \{0,1\dots, q-1\}^n$ with $|\alpha| \leq \frac{1}{2}\deg P$. 

The variant of the proof of this last fact given in \cite{gijswijt} is sufficiently short that we can give it here: we have $M = XYZ$ where
\begin{itemize}
\item $X$ is the $|A| \times q^n$ matrix with $(a,\alpha)$-entry $a^{\alpha}$ (where $\alpha \in\{0,1\dots, q-1\}$);
\item $Y$ is the $q^n \times q^n$ matrix with $(\alpha, \beta)$-entry $c_{\alpha, \beta}$, where $f(x + y) =\sum_{\alpha, \beta} c_{\alpha, \beta} x^{\alpha} y^{\beta}$;
\item $Z$ is the $q^n \times |A|$ matrix with $(\beta, b)$-entry $b^{\beta}$.
\end{itemize}
Thus $\mbox{rk}(M) \leq \mbox{rk}(Y)$. However, we may write
\[ f(x+ y) = \sum_m m(x) a_m(y) + m(y) b_m(x)\] where $m$ ranges over the set  of monomials $m(t) = t^{\gamma}$ with $\gamma \in S$, the set of all indices $\gamma \in \{0,1,\dots, q-1\}$ with $|\gamma| \leq \frac{1}{2}\deg f$, and so $Y$ is the sum of two matrices, one with row rank at most $|S|$ and the other with column rank at most $|S|$.

Returning to our main argument, putting these observations together shows that $|A|$ is at most twice the number of indices $\alpha \in \{0,1,\dots ,q-1\}^n$ with $|\alpha| \leq \frac{1}{2}(q - 1)(n - \frac{m}{d})$, or in other words
\[ |A| \leq 2q^n \P\big(X_1 + \dots + X_n \leq \frac{1}{2}(q - 1)(n - \frac{m}{d})\big),\]
where the $X_i$ are i.i.d. random variables, each having the uniform distribution on $\{0,1,\dots, q-1\}$. Equivalently, 
\begin{equation}\label{a-bound} |A| \leq 2q^n \P \big(Y_1 + \dots + Y_n \leq - \frac{m}{d}\big),\end{equation} where the $Y_i$ are i.i.d. random variables, each uniformly distributed on the points $-1 + j(\frac{2}{q-1})$, $j = 0,1,\dots, q-1$, which all lie in $[-1,1]$.

With some calculation, one could presumably obtain a fairly satisfactory asymptotic for the right-hand side of \eqref{a-bound}. However, this would not be particularly straightforward and in any case, in our opinion, the general behaviour of the exponents in Theorem \ref{mainthm3} is of more interest than any numerical constants. To obtain this we can use the quite general and rather tidy large deviation inequality of Bernstein type due to Hoeffding, namely $\P \big( Y_1 + \dots + Y_n \leq - t) \leq e^{-t^2/2n}$. This applies to any independent random variables $Y_i$ taking values in $[-1,1]$ and with $\E Y_i = 0$. By \eqref{a-bound}, this gives the bound $|A| \leq 2 q^n e^{-m^2/2nd^2}$, which is the same as the one claimed in Theorem \ref{mainthm3}.

\section{S\'arkozy's theorem in function fields}

The remaining task is to deduce Theorems \ref{mainthm1} and \ref{mainthm2} from Theorem \ref{mainthm3}. 

\emph{Proof of Theorem \ref{mainthm1}.} Identify $P_{q,n}$ with $\F_q^n$ via the map $c_0 + c_1T + \dots + c_{n-1} T^{n-1} \mapsto (c_0, \dots, c_{n-1})$. Set $m := \lfloor \frac{n-1}{k}\rfloor + 1$. We have 
\begin{align*} (a_0 & + a_1 T + \dots + a_{m-1} T^{m-1})^k \\ & = g_0(a_0,\dots, a_m) + g_1(a_0,\dots, a_m) T + \dots + g_{n-1}(a_0,\dots, a_m) T^{n-1},\end{align*}
where the $g_i$ are polynomials with $g_i(0) = 0$ and $\deg g_i \leq D_q(k)$, the sum of the digits of $k$ in base $q$. To see this last point, note that if 
 $k = b_0 + b_1 q + \dots + b_r q^r$ in base $q$ then
 \begin{align*} (& a_0 + a_1 T + \dots + a_{m-1} T^{m-1})^k \\ &  = (a_0 + a_1 T + \dots + a_{m-1} T^{m-1})^{b_0} (a_0 + a_1 T^q + \dots + a_{m-1} T^{qm})^{b_1}  \cdots\end{align*}
Define $\Phi : \F_q^m \rightarrow \F_q^n$ by taking $\Phi = (\phi_1,\dots, \phi_n)$, where \[ \phi_i(a_0,\dots, a_{m-1}) = g_{i-1}(a_0,\dots, a_{m-1}).\] Then, under the identification of $P_{q,n}$ with $\F_q^n$, $\im (\Phi)$ is precisely the set of $k$th powers of polynomials. Furthermore $\deg \Phi \leq D_q(k)$, and $\Phi^{-1}(0) = \{0\}$. Thus we may apply Theorem \ref{mainthm3} with $m = \lfloor \frac{n-1}{k}\rfloor + 1 \geq \frac{n}{k}$ and $d = D_q(k)$ and thereby obtain Theorem \ref{mainthm1}.

We remark that Will Sawin has pointed out to us that if $q = p^r$ then one could, if desired, replace $D_q(k)$ by $D_p(k)$. This may be done by considering the vector space of degree $< n$ polynomials over $\F_q$ as a vector space of dimension $nr$ over $\F_p$, noting that the $k$th power map is a polynomial map of degree $D_p(k)$.

\emph{Proof of Theorem \ref{mainthm2}.} This is almost the same as the proof of Theorem \ref{mainthm1}, except that now we can only bound $\deg \Phi$ by $\sup_{k' \leq k} D_q(k') \leq (q - 1)(1 + \log_q k)$. We must also be a little more careful in checking that $|\Phi^{-1}(0)|$ is coprime to $q$. To see this, note that $|\Phi^{-1}(0)|$ is the number of polynomials $p \in \F_q[T]$ with $F(p(T)) \equiv 0$. Since $\deg F(p(T)) = \deg F \deg p$, we must have $\deg p = 0$, that is to say $p(T) = c$ is a constant polynomial. For such a polynomial, $F(p(T))$ is the zero polynomial if and only if $c$ is a root of $F$. 

 Thus $|\Phi^{-1}(0)|$ is the number of roots of $F$ in $\F_q$ which, by assumption, is coprime to $q$.

\end{document}